\newcommand{\bun}{\mathrm{Bun}}
\newcommand{\B}{\mathrm{B}}
\newcommand{\D}{\mathrm{D}^b}
\newcommand{\map}{\mathrm{Map}}
\newcommand{\sll}{\mathrm{SL}_2}
\newcommand{\slr}{\mathrm{SL}_r}
\newcommand{\E}{\mathcal{E}}
\newcommand{\V}{\mathcal{V}}
\newcommand{\K}{\mathcal{K}}
\newcommand{\leng}{\ell}
\newcommand{\h}{h^\vee}
\newcommand{\gra}{\mathrm{gr}_{*}}
\newcommand{\sym}{\mathrm{Sym}}
\newcommand{\Hom}{\mathrm{Hom}}
\newcommand{\cdl}{\mathcal{L}}
\newcommand{\co}{\mathcal{O}}
\newcommand{\N}{\mathbb{N}}
\newcommand{\G}{\mathbb{G}}
\newcommand{\g}{\mathfrak{g}}
\newcommand{\ga}{G^{ad}}
\newtheorem{thm}{Theorem}
\newtheorem*{thm*}{Theorem}
\newtheorem{rmk}{Remark}
\newtheorem{lem}{Lemma}
\newtheorem{cor}{Corollary}
\newtheorem*{cor*}{Corollary}
\newtheorem{conj}{Conjecture}
\begin{document}

\title{Semiorthogonal decomposition of $\mathrm{D}^b(\mathrm{Bun}_2^L)$}

\author{Kai Xu\footnote{kaixu@math.harvard.edu}\,\, and Shing-Tung Yau\footnote{yau@math.harvard.edu}}
% \email{kaixu@math.harvard.edu}
% \affiliation{Mathematics Department, Harvard University}

% \begin{document}

\maketitle

\begin{abstract}
    We study the derived category of coherent sheaves on various versions of moduli space of vector bundles on curves by the Borel-Weil-Bott theory for loop groups and $\Theta$-stratification, and construct a semiorthogonal decomposition with blocks given by symmetric powers of the curve.
\end{abstract}

\section{Introduction}
Semiorthogonal decomposition of derived categories of coherent sheaves on algebraic varieties encodes rich information about the variety. It is shown that for a smooth projective variety if the canonical bundle is generated by global sections, there is no nontrivial semiorthogonal decomposition \cite{kawatani2015nonexistence}. In contrast, for Fano varieties there tends to be a large semiorthogonal decomposition, which reveals deep information of the geometric structures (for example, the Kutznesov component of cubic fourfold is closely related to the rationality problem \cite{kuznetsov2010derived}). Moduli theory is a natural source of interesting Fano varieties whose derived category is difficult to study directly. 

In this paper we will study a first example of such problem arising from the moduli of vector bundles on curves. Traditionally the coarse moduli (being an algebraic variety instead of a stack) is the central object of study, but we will see that in this example the study of fine moduli is very rewarding and brings new knowledge even to people who merely cares about the coarse moduli, which we believe is a universal phenomena for all moduli spaces. Some discussions around this phenomena in the context of Deligne-Mumford stacks also appeared elsewhere, for example \cite{polishchuk2019semiorthogonal}\cite{castravet2020derived1}\cite{castravet2020derived}.

In our situation, the fine moduli has a paricular nice structure of double quotient of loop group given by Weil uniformization (in this context proved by Drinfeld and Simpson \cite{drinfeld1995b}), which allows to compute the cohomology explicitly by the Borel-Weil-Bott theory of loop groups developed by C. Teleman in \cite{teleman1995lie}\cite{teleman1996verlinde}\cite{teleman1998borel}. This would lead to a semiorthogonal decomposition of $\mathrm{D}^b(\mathrm{Bun}_{\sll})=\mathrm{D}^b\mathrm{Coh}(\mathrm{Bun}_{\sll})$, the bounded derived category of coherent sheaves on the moduli stack of $\sll$ bundles over a fixed curve $X$. In fact, we will consider a slightly generalized version of $\sll$ bundles twisted by a $Z=Z(\sll)$ gerbe $\xi$, whose moduli will be denoted by $\mathrm{Bun}_{\sll}^\xi$ and is identified with the moduli $\mathrm{Bun}_2^L$ of rank two vector bundles with a fixed determinant $L$.

Our main theorem goes as follows (here $\mathcal{L}$ is a generator of $\mathrm{Pic}(\mathrm{Bun}_2^L)$): 

\begin{thm*}
We have the following semiorthogonal decomposition:
\begin{equation}
    \D(\mathrm{Bun}_2^L)=\langle \mathcal{L} ^{\otimes k}\otimes \D(\sym^n X), \mathcal{A} \rangle
\end{equation}
where $0\leq k<4$, $n\in \mathbb{N}$ and $\mathcal{A}$ is the right orthogonal complement of previous blocks (which is conjecturally zero).
\end{thm*}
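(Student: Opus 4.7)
The strategy is to combine the $\Theta$-stratification machinery of Halpern-Leistner with Teleman's Borel-Weil-Bott theory for the loop group $L\sll$. First I would set up the Harder-Narasimhan stratification $\bun_2^L=\bun_2^{L,\mathrm{ss}}\sqcup\bigsqcup_{n\geq 1} S_n$, where $S_n$ is the locus whose maximal line subbundle $M$ satisfies $2\deg M-\deg L=n$. Each $S_n$ is a $\Theta$-stratum: there is an attractor map $S_n\to Z_n$ whose fibers parametrize extensions $0\to M\to E\to M^{-1}\otimes L\to 0$, and the center $Z_n$ parametrizes the associated graded $M\oplus M^{-1}L$ together with its canonical $\G_m$-grading, so $Z_n\cong\mathrm{Pic}^{(\deg L+n)/2}X\times\B\G_m$ as a first approximation.

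Next I would apply Halpern-Leistner's theorem to produce a semiorthogonal decomposition $\D(\bun_2^L)=\langle\bigoplus_n \mathcal{W}_n,\,\D(\bun_2^{L,\mathrm{ss}})\rangle$, where each $\mathcal{W}_n$ is a weight-window subcategory on $Z_n$. The width of the window on the semistable part is the critical width $2\h=4$ for $\sll$, and on this piece Teleman's vanishing/BWB theorem produces the exceptional collection $\cdl^0,\cdl^1,\cdl^2,\cdl^3$, corresponding to the $n=0$ case (with $\sym^0 X=\mathrm{pt}$). On each unstable stratum, the four admissible $\G_m$-weights account for the factor $0\leq k<4$ in the theorem.

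The most delicate step is the identification of the window $\mathcal{W}_n$ with $\cdl^{\otimes k}\otimes\D(\sym^n X)$ rather than just with $\D(\mathrm{Pic}^d X)$. My plan is to refine the description of $S_n$ using the extension data on the attractor: the universal extension class defines a canonical section whose vanishing locus is an effective divisor of degree $n$, giving a natural morphism to $\sym^n X$ that factors through the Abel-Jacobi fibration $\sym^n X\to\mathrm{Pic}^n X$. Combined with Orlov's projective bundle formula this would enhance the naive $\D(\mathrm{Pic})$-block to the full $\D(\sym^n X)$; twisting by the generator $\cdl$ then produces the four copies indexed by $k$. Semiorthogonality across different $(n,k)$ is the formal consequence of the $\Theta$-stratification once Teleman's vanishing is applied to the relevant loop-group line bundles living in the critical alcove.

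The main obstacle is precisely this identification in Step three: Halpern-Leistner's theorem only gives the window on the center $Z_n$, and turning this Jacobian block into a symmetric-product block requires a genuinely geometric input tying the extension class to an Abel-Jacobi/Hecke-type description. A secondary technical difficulty is the infinite chain of strata and the infinite-type nature of $\bun_2^L$, which requires care with convergence and with applying Teleman's theorems uniformly in $n$; if either of these obstructions cannot be overcome cleanly, the residual category $\mathcal{A}$ absorbs the defect, which is consistent with the theorem being stated with $\mathcal{A}$ only conjecturally zero.
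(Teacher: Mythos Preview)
Your proposal inverts the logical architecture of the paper's argument, and the inversion creates a genuine gap.

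In the paper, the semiorthogonal decomposition of $\D(\bun_2^L)$ is proved \emph{directly} on the full stack, with no appeal to $\Theta$-stratification whatsoever. One writes down explicit Fourier--Mukai kernels $\V_n$ on $X^{(n)}\times\bun_G^\xi$ coming from the $n$-fold tensor power of the fundamental representation, obtains functors $E_n:\D(\sym^n X)\to\D(\bun_G^\xi)$, and then checks fully-faithfulness via the Bondal--Orlov criterion and semiorthogonality among the $E_n$'s (and their $\cdl^{\otimes k}$-twists) by direct cohomology computations. All of those computations are carried out using Teleman's Borel--Weil--Bott theorem for loop groups, together with a filtration argument relating $E_n(\co_x)$ to $V_n(\co_{x'})$ through the coinvariant algebra. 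The Halpern--Leistner machinery enters only \emph{afterwards}, and in the opposite direction: it is used to restrict the already-established decomposition on $\bun_2^L$ down to the semistable locus $\bun_2^{L,ss}$.

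Your plan attempts to run this backwards: decompose $\D(\bun_2^L)$ via the Harder--Narasimhan $\Theta$-stratification into $\D(\bun_2^{L,ss})$ plus window categories on the unstable strata, and then identify each window with some $\cdl^{\otimes k}\otimes\D(\sym^n X)$. There are two concrete problems. First, the semistable block in such a decomposition is not just the four line bundles $\cdl^{0},\ldots,\cdl^{3}$; as the paper's Theorem~6 shows, $\D(\bun_2^{L,ss})$ itself already contains many copies of $\D(\sym^n X)$ (all $n<g-[k/2]$), so your accounting of where each $(k,n)$ block lives is off. Second, the step you flag as ``the main obstacle'' is exactly the substance of the theorem: the centers $Z_n$ of the HN strata are Picard-type, and promoting a $\D(\mathrm{Pic})$-window to a $\D(\sym^n X)$-block via an Abel--Jacobi/extension-divisor argument is not a routine application of Orlov's projective bundle formula; you have not supplied the mechanism, and the paper avoids the issue entirely by never passing through the strata. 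In short, the $\Theta$-stratification does not produce the symmetric-product blocks; the explicit kernels $E_n$ do, and Borel--Weil--Bott verifies their properties.
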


To prove the main theorem it is natural to construct the blocks $\D(\sym^n X)$ from the universal $\sll$ bundle on $\bun_{\sll}^\xi$: for all $n\in\mathbb{N}$ let $X^{(n)}=X^{n}/S_{n}$ be the $n$-th stacky symmetric power of $X$, we have a natural twisted vector bundle $\E_n$ on $X^{(n)}\times \bun_{\sll}^\xi$, whose fibers are given by tensor products of fundamental representation $V$ of $\sll$ twisted by the $\sll$ bundle (see Section 2 for more precise definition). Such a twisted bundle gives rise to to a functor\footnote{A priori the domain should be the derived category of twisted sheaves, but since the Brauer group of curve is trivial this is equivalent to the usual derived category (non-canonically). We should work with twisted sheaf if we want a completely canonical functor. In particular, we need to descent the trivial gerbe on $X^{(n)}$ determined by $\xi$ \textit{canonically} to $\sym^nX$, which can be done because the action of isotropy groups in $X^{(n)}$ can indeed be canonically trivialized.} $\D(X^{(n)})\longrightarrow \D(\bun_{\sll}^\xi)$ which reveals rich structures of the target and are already very interesting when $n=1$. They give rise to embeddings $\D(\sym^n X)\longrightarrow \D(\bun_{\sll}^{\xi})$. To prove these properties the key technical input is the Borel-Weil-Bott theory for loop groups by C. Teleman which identifies the cohomology of various vector bundles on $\bun_G$ with conformal blocks, and allows us to compute them explicitly.

After we obtain the semiorthogonal decomposition for the fine moduli, the powerful machinery of $\Theta$-stratification developed by Halpern-Leistner  \cite{halpern2014structure}\cite{2020arXiv201001127H} (which is a vast generalization of the quantization commutes with reduction philosophy, cf. \cite{teleman2000quantization}\cite{halpern2015derived}\cite{ballard2012variation}) allows us (under the hypothesis that stability is equivalent to semistability) to extract a semiorthogonal decomposition for its stable part $\bun_G^{\xi,s}$ which is a gerbe over the coarse moduli $Bun_G^{\xi,s}$. By considering the character under the action of center we can obtain a decomposition of the coarse moduli (for the most interesting case for nontrivial $\xi$ in which $Bun_G^{\xi,s}$ is a smooth Fano variety of index  $2$). In more traditional langugage what we get is as follows:

\begin{cor*}
For line bundle $L$ with odd degree we have the following semiorthogonal decomposition:
\begin{equation}
    \D(Bun_2^{L,s})=\langle \mathcal{L}^{\otimes 2l}\otimes \D(\sym^n X), \mathcal{A}'' \rangle
\end{equation}
where $0\leq l<2$, $n<g-l$ and $\mathcal{A}''$ is the right orthogonal complement of previous blocks (which is conjecturally zero). Note $\theta=\mathcal{L}^{\otimes 2}$ is a generator of the Picard group of $Bun_2^{L,s}$.
\end{cor*}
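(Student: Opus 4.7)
The strategy is to combine two standard reductions: Halpern-Leistner's theory of $\Theta$-stratifications (already cited in the paper) to pass from the stack $\bun_2^L$ of the main theorem down to its open stable substack $\bun_2^{L,s}$, followed by $\mu_2$-gerbe descent to the coarse moduli $Bun_2^{L,s}$. The real work lies in verifying that these two operations interact cleanly with the explicit blocks of the main theorem and produce the bounds $0 \leq l < 2$ and $n < g - l$.

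Since $\deg L$ is odd, stability and semistability coincide on $\bun_2^L$, so the Harder-Narasimhan/$\Theta$-stratification has a clean structure: an open stable stratum $\bun_2^{L,s}$ together with unstable strata $S_d$ indexed by the degree $d > \deg(L)/2$ of a maximal destabilizing line sub-bundle, each $S_d$ carrying a destabilizing $\G_m$-action whose fixed locus is essentially $\mathrm{Pic}^d(X) \times \B\G_m$. Halpern-Leistner's derived Kirwan surjectivity provides, for any compatible choice of weight windows along each $S_d$, a semiorthogonal decomposition of $\D(\bun_2^L)$ whose central block is equivalent via restriction to $\D(\bun_2^{L,s})$. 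The next step is to sort each block $\mathcal{L}^{\otimes k} \otimes \D(\sym^n X)$ of the main theorem by its behavior along each $S_d$: the theta line bundle $\mathcal{L}$ contributes a $\G_m$-weight linear in $d$, and the universal rank-2 bundle restricts on $S_d$ to $L' \oplus L''$ with $\G_m$-weights $\pm 1$, so that $\E_n$ (with fibers $V^{\otimes n}$) carries weights $n, n-2, \ldots, -n$. Comparing these to the window width, set by the $\G_m$-weight of the relative canonical bundle of $S_d$ inside $\bun_2^L$ and growing linearly in $g$ and $d$, will classify the blocks into those landing in the window (and hence surviving restriction to $\bun_2^{L,s}$) and those feeding into the unstable pieces. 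The expected outcome is a semiorthogonal decomposition of $\D(\bun_2^{L,s})$ by the same blocks, subject to a constraint of the form $n < g - \lfloor k/2 \rfloor$.

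For the second reduction, $\bun_2^{L,s}$ is a $\mu_2$-gerbe over $Bun_2^{L,s}$, and so $\D(\bun_2^{L,s})$ decomposes canonically by central character as $\D(Bun_2^{L,s}) \oplus \D^{\alpha}(Bun_2^{L,s})$, where $\alpha$ is the Brauer class of the gerbe. Since $\theta = \mathcal{L}^{\otimes 2}$ generates $\mathrm{Pic}(Bun_2^{L,s})$, the line bundle $\mathcal{L}$ has $\mu_2$-weight $1$, while $\E_n$ has $\mu_2$-weight $n$, so each block sits entirely in one summand. Projecting onto the trivial character then picks out precisely the blocks with even $k = 2l$, and this, together with the bound of the previous step, gives the statement of the corollary after reindexing; the functor $\D(\sym^n X) \to \D(Bun_2^{L,s})$ of the corollary is realized as the composition of the main theorem's functor with this projection, using the twisted-sheaf conventions of the main theorem's footnote to handle $n$ of the wrong parity relative to $k = 2l$.

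The main obstacle will be the weight bookkeeping of the first step: computing the window widths precisely along each $S_d$ (which requires a careful analysis of the normal bundle and its $\G_m$-weights), and verifying that the resulting constraint on $(k, n)$ collapses cleanly to the stated $n < g - l$ once the parity restriction $k = 2l$ is imposed. By contrast the gerbe-descent step is essentially formal given the explicit Picard generator $\theta = \mathcal{L}^{\otimes 2}$ and the $\mu_2$-weights of $\mathcal{L}$ and $\E_n$.
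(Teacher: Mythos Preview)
Your approach is correct and essentially identical to the paper's: the corollary is proved in the body as Theorems~6 and~7, first applying Halpern-Leistner's $\Theta$-stratification machinery to obtain the bound $n < g - \lfloor k/2 \rfloor$ on $\D(\bun_G^{\xi,ss})$, and then rigidifying the $\B Z$-action (equivalently, your $\mu_2$-gerbe descent) to pass to the coarse moduli. Your proposal is in fact more explicit than the paper's own argument, which leaves both the weight bookkeeping along the unstable strata and the parity reindexing you flag (combining the $k$ and $k+1$ blocks into a single $l$-family via the twisted-sheaf conventions) largely implicit.
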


This corollary has a long history with contributions from various groups of people\footnote{this may not be a complete list, we apologize for possible omission and welcome comments.}: the first one is the Bondal and Orlov result \cite{bondal1995semiorthogonal} in the genus $2$ case; Fonarev and Kuznetsov \cite{fonarev2018derived} proved the first nontrivial block\footnote{which is the third one, since the first two blocks follows from the fact that $Bun_2^{L,s}$ is a Fano manifold of index $2$} $\D(X)\longrightarrow \D(Bun_2^L)$ for generic curves by deformation from hyperelliptic curves; in \cite{narasimhan2016derived}\cite{narasimhan2017derived} Narasimhan proved the embedding $\D(X)\longrightarrow \D(Bun_2^L)$ for all $g\geq 2$ as well as the semiorthogonality of the first three blocks. Belmans, Galkin and Mukhopadhyay explicitly formulated this conjecture\footnote{with another part $\mathcal{A}''=0$ which is still open} in \cite{belmans2018semiorthogonal} and relates it with Grothendieck ring and mirror symmetry in \cite{belmans2020graph}; Lee \cite{lee2018remarks}, Gómez and Lee \cite{gomez2020motivic} stated an analogous conjecture for $Bun_n^L$ based on the motivic decompositioin (most explicitly for $n=3$). Belmans and Mukhopadhyay \cite{belmans2019admissible} proved a further generalization to arbitrary rank bundles the decomposition of first four blocks of this decomposition (for high genus curves). Lee and Narasimhan proved the next term $\D(\sym^2 X)\longrightarrow \D(Bun_2^L)$ (under certain mild assumptions) in their preprint \cite{lee2021symmetric}, and also introduced the Fourier-Mukai kernel for general $\D(\sym^n X)\longrightarrow \D(Bun_2^L)$ which is identical with what we use. During the preparation of our paper, this conjecture was proved in full generality independently by Tevelev and Torres using stable pairs \cite{tevelev2021narasimhan}. We will refer to this conjecture as BGMN (Belmans-Galkin-Mukhopadhyay and Narasimhan) conjecture to honor their significant contributions.

\section*{Acknowledgments} 

The authors are grateful for Swarnava Mukhopadhyay for introducing to us their works \cite{belmans2018semiorthogonal}\cite{belmans2020graph} and related history. K.X. is grateful to Yuchen Fu for great helps on geometric representation theory, and also thanks Shaoyun Bai, Lin Chen, Zhengping Gui and Ziquan Yang for helpful discussions.

\section{The main theorem}

Throughout this paper we fix an algebraically closed field $k$ of characteristic $0$ and a smooth projective curve $X$ over $k$. Let $G$ be a simple algebraic group\footnote{for this section we will consider $G=\sll$}, and $\bun_G=\map(X,\mathrm{B}G)$ be the moduli stack of $G$-bundles\footnote{which by definition means the stack representing the functor $S\mapsto \map(X\times S,\mathrm{B}G)$} over $X$. Let $Z=Z(G)$ denote the center of $G$, $\ga=G/Z$ denote the adjoint group of $G$, given a $Z$-gerbe $\xi\in \map (X,\B^2Z)$, we define the moduli of $\xi$-twisted $G$ bundle $\bun^\xi_G$ to be the fiber of $\map(X,\B \ga)\longrightarrow\map(X,\B^2Z)$ at $\xi$, which recover $\bun_G$ when $\xi$ is the trivial gerbe. Let $\bun_G^{\xi,s}$ (resp, $\bun_G^{\xi,ss}$) denote the open substack of stable (resp, semistable) twisted bundles. For $H=\slr$, we have an equivalent description $\bun^\xi_H\simeq \bun_r^L$ as the moduli stack of rank $r$ bundles with a fixed determinant $L$, where the degree of $L$ equals to $\xi$ in the cohomology group $H^2(X,Z)\simeq \mathbb{Z}/r\mathbb{Z}$.\footnote{Here we use the étale topology on $X$. Note that this a canonical isomorphism without choosing a root of unity.} When $(r,\deg L)=1$, it's well known that the coarse moduli of stable bundles $Bun_r^{L,s}$ is a smooth projective variety and the moduli stack of stable bundles $\bun_r^{L,s}$ is a $Z$ gerbe over the coarse moduli $Bun_r^{L,s}$.

Let $X^{(n)}=  X^{n}/S_{n}$ be the $n$-th stacky symmetric power of $X$ and $\sym^nX$ be its coarse moduli space (which is smooth and projective of dimension $n$). A fact we will use is that the natural pull back $\D(\sym^nX)\longrightarrow \D(X^{(n)})$ is fully-faithful (essentially because taking $S_n$-invariance is exact, a much stonger statement was proved in \cite{polishchuk2019semiorthogonal}), giving a full subcategory which will play an important role in this paper.

For any $n$, we have a representation $V^{\boxtimes n}$ of $G^{n}$ which is the external tensor product of $n$ copies of fundamental representation $V$. This determines a vector bundle on $  \B G^{n}$ (and a twisted sheaf on $  \B (\ga)^{n}$). We can pull it back by the evaluation map $X^{n}\times \bun^\xi_G\longrightarrow \B (\ga)^{n}$ and get a twisted sheaf $\Tilde{\V} _n$ on $X^{n}\times \bun^\xi_G$. This is naturally equivariant with respect to $S_n$ and hence descends to a twisted sheaf $\V_n$ on $X^{(n)}\times \bun^\xi_G$, which is equivalent to a functor $V_n: \D(X^{(n)},\chi\circ\xi)\longrightarrow \D(\bun^\xi_G)$ where $\chi:Z^{n}\longrightarrow \G_m$ is the product of central characters of $V$. As mentioned before, the domain may be identified with non-twisted derived category for the triviality of Brauer group for curves. The composition of $V_n$ with the natural embedding $\D(\sym^{n} X)\longrightarrow \D(X^{(n)})$ will be denoted by $E_n$. As we will show, $E_m$ is fully faithful for all $m$ and the image of $E_m$ is left orthogonal to the image of $E_n$ for $m<n$.

To prove the fully-faithfulness we need to invoke the celebrated Bondal-Orlov criterion for fully-faithfulness which reduces the proof to computation of the cohomology of certain vector bundles. The original version was proved in \cite{bondal1995semiorthogonal}, which already satisfy our need, if we want to work canonically with twisted sheaves we may use the generalization proved in \cite{lim2020bondal}.

\begin{thm}
Let $Y$ be a smooth projective variety over an algebraically closed field of characteristic $0$, $F$ is a functor from $\D(Y)$ to another triangulated category $\mathcal{T}$, then $F$ is fully faithful if and only if it admit a right adjoint, a left adjoint $G$ with $G\circ F$ of Fourier-Mukai type, and that \begin{itemize}
    \item for any closed point $y\in Y$ we have $$\Hom_\mathcal{T}(F(\co_y),F(\co_y))\simeq k$$
    \item for any closed points $y,z\in Y$ we have 
    $$\Hom_\mathcal{T}(F(\co_y),F(\co_z)[n])\simeq 0$$ unless $y=z$ and $0\leq n\leq \dim Y$
\end{itemize}
\end{thm}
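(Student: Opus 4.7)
The plan is to prove both directions. The ``only if'' direction is formal: if $F$ is fully faithful, both adjoints exist in this setting (e.g., by Bondal--Van den Bergh), the composition $GF$ is naturally isomorphic to $\mathrm{id}_{\D(Y)}$ and hence of Fourier--Mukai type with kernel $\co_\Delta$, and the desired $\Hom$ computations hold already in $\D(Y)$ via the standard identification $\mathrm{Ext}^n(\co_y, \co_z) = \delta_{y,z}\,\bigwedge^n T_y Y$ on a smooth variety.

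For the nontrivial ``if'' direction, let $K \in \D(Y \times Y)$ denote the Fourier--Mukai kernel of $GF$ and let $\alpha : \co_\Delta \to K$ be the map of kernels corresponding to the unit $\eta : \mathrm{id} \to GF$. Showing that $\alpha$ is an isomorphism then suffices, since it makes $\eta$ an equivalence and hence $F$ fully faithful. Fix a closed point $y \in Y$, let $i_y : \{y\} \times Y \hookrightarrow Y \times Y$ denote the inclusion, and set $K_y := Li_y^* K \simeq GF(\co_y)$. By the $(G,F)$-adjunction,
\[
\Hom_{\D(Y)}\bigl(K_y,\, \co_z[n]\bigr) \;=\; \Hom_{\mathcal{T}}\bigl(F(\co_y),\, F(\co_z)[n]\bigr),
\]
so the hypotheses translate into: $\Hom(K_y, \co_z[n]) = 0$ for every $z \ne y$ and every $n$; $\Hom(K_y, \co_y[n]) = 0$ for $n \notin [0, d]$ where $d := \dim Y$; and $\Hom(K_y, \co_y) \simeq k$.

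The heart of the argument, and the main obstacle, is to upgrade these $\Hom$ constraints to an actual isomorphism $K_y \simeq \co_y$. The point-orthogonality forces $K_y$ to be set-theoretically supported at $y$, so locally $K_y$ is a bounded complex of finite-length modules over the regular local ring $A = \co_{Y,y}$. The one-dimensionality of $\Hom(K_y, \co_y)$ is a strong constraint, but does not alone exclude thickenings such as $\co_y / \mathfrak{m}^k$. To rule these out, I would use Serre duality on $Y$ to convert the $\Hom$-vanishings above into dual vanishings $\Hom(\co_z, K_y[m]) = 0$ for $z \ne y$ (all $m$) and for $m \notin [0, d]$; combined with a minimal-resolution analysis of bounded complexes of finite length over the regular local ring $A$ (with the hypothesized right adjoint to $F$ used to justify the categorical manipulations on $\mathcal{T}$), these rigidify $K_y$ to the skyscraper $\co_y$. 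Moreover, the map $\alpha_y := Li_y^* \alpha : \co_y \to K_y$ corresponds under adjunction to $\mathrm{id}_{F(\co_y)}$, hence is nonzero; since $\Hom(\co_y, \co_y) = k$, it must be the isomorphism.

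Finally, I would consider the cone $C := \cone(\alpha) \in \D(Y \times Y)$: the step above shows $Li_y^* C \simeq 0$ for every closed $y \in Y$. Therefore for every closed $(y,z) \in Y \times Y$ and every $n$,
\[
\Hom\bigl(C,\, \co_{(y,z)}[n]\bigr) \;=\; \Hom\bigl(Li_y^* C,\, \co_z[n]\bigr) \;=\; 0,
\]
and since the shifted skyscrapers form a spanning class in $\D(Y \times Y)$, we conclude $C = 0$. Thus $\alpha$ is an isomorphism, $\eta$ is a natural equivalence, and $F$ is fully faithful.
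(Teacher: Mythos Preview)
The paper does not prove this theorem at all: it is quoted as the Bondal--Orlov criterion and attributed to \cite{bondal1995semiorthogonal} (with the twisted generalization to \cite{lim2020bondal}), and then used as a black box. So there is no ``paper's own proof'' to compare against; your sketch is essentially the standard Bondal--Orlov argument from the cited literature.

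A couple of comments on the sketch itself. The ``if'' direction you outline is the classical route: pass to the kernel $K$ of $GF$, show $K_y\simeq\co_y$ fiberwise using the $\Hom$ constraints plus Serre duality and a local analysis over the regular local ring, and then kill the cone of $\co_\Delta\to K$ by testing against skyscrapers. The part you flag as the ``main obstacle'' really is where the content lies, and your description (``minimal-resolution analysis \ldots\ rigidify $K_y$ to $\co_y$'') is correct in spirit but is precisely the lemma whose proof requires care; in the original references this is done via a spectral-sequence/length argument over $\co_{Y,y}$. Also, your notation $\co_y/\mathfrak{m}^k$ should be $\co_{Y,y}/\mathfrak{m}^k$.

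For the ``only if'' direction, be careful: the theorem as stated allows $\mathcal{T}$ to be an arbitrary triangulated category, and Bondal--Van den Bergh saturatedness of $\D(Y)$ does not by itself produce adjoints for a fully faithful $F:\D(Y)\to\mathcal{T}$ without hypotheses on $\mathcal{T}$. In practice (and in this paper) only the ``if'' direction is used, with the adjoints and the Fourier--Mukai property of $GF$ taken as hypotheses, so this does not affect the application.
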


We can actually compute the image of $\co_x$ under the functors $E_n$ up to extension. It turns out they have a very simple description as follows:
\begin{thm}
Given any point $x=(x_1,\cdots,x_{n})\in \sym^nX$, choose a preimage $x'\in X^{n}$, let $V_n$ be the functor\footnote{we used the same notation before for the same functor with natural $S_n$ equivariance condition} corresponding to the Fourier-Mukai kernel $\V_n\in\D(X^{n}\times \bun_G^\xi)$ we have a canonical increasing filtration $F^*$ on $E_n(\co_x)$ such that $$\gra\,\,E_n(\co_x)\simeq V_n(\co_{x'})$$ Moreover, the highest weight component of right hand side appears in the highest degree on the left hand side. 
\end{thm}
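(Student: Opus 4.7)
The plan is to compute $E_n(\co_x)$ by a direct local analysis on the stacky presentation $X^{(n)} = [X^n/S_n]$, reducing the construction of the filtration to classical invariant theory for the isotropy group $H_x \subset S_n$ of $x'$. The first step is to identify the derived pullback $\pi^*\co_x$ under the coarse-moduli map $\pi\colon X^{(n)} \to \sym^n X$. Working étale-locally at $x'$, model $X^n$ by $\mathrm{Spec}\, k[t_1,\dots,t_n]$ and $\sym^n X$ by $\mathrm{Spec}\, k[e_1,\dots,e_n]$. Since the finite surjection $X^n \to \sym^n X$ is flat (by miracle flatness, source and target being smooth of the same dimension $n$), the $e_i$'s form a regular sequence, so $\pi^*\co_x$ is concentrated in degree zero and, on the component through $x'$, is identified with the local coinvariant algebra $R = k[t_1,\dots,t_n]/(e_1,\dots,e_n)$ as an $H_x$-equivariant graded module. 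By the Shephard--Todd--Chevalley theorem, $R$ is (ungraded) the regular representation of $H_x$.

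Next, the Fourier-Mukai kernel $\V_n$ is applied: since $p_1^*\pi^*\co_x$ is supported on the $S_n$-orbit of $\{x'\}\times\bun_G^\xi$, and $\V_n$ restricts at $x'$ to $V_n(\co_{x'}) = \mathrm{ev}_{x'}^* V^{\boxtimes n}$ with $H_x$ acting by permuting the tensor factors attached to equal coordinates of $x$, the derived pushforward $p_{2,*}$ reduces to taking $H_x$-invariants:
\[
E_n(\co_x) \;\simeq\; \bigl(V_n(\co_{x'}) \otimes R\bigr)^{H_x}.
\]
The polynomial grading $R = \bigoplus_k R^{(k)}$ then induces the desired canonical increasing filtration $F^\bullet$ on $E_n(\co_x)$, and summing the graded pieces together with the Chevalley identification $(W\otimes k[H_x])^{H_x} \simeq W$ yields $\gra\, E_n(\co_x) \simeq V_n(\co_{x'})$ as vector bundles on $\bun_G^\xi$.

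For the highest-weight claim, I would decompose $V_n(\co_{x'}) = V^{\otimes n}$ by Schur--Weyl duality, applied blockwise to $H_x = \prod_i S_{n_i}$: each $\sll$-isotypic summand is paired with an $H_x$-irrep whose graded multiplicities in $R$ are governed by classical fake-degree / Kostka-type formulas. The $\sll$-highest-weight summand $\bigotimes_i \sym^{n_i}V$ (containing $\sym^n V$) is paired with the trivial character of $H_x$, which lies in the extremal graded piece of $R$ under the appropriate orientation of the grading, yielding the claimed placement. The main obstacle is the first step: since $\pi$ is neither flat nor étale, $\pi^*\co_x$ is a genuinely derived pullback, and identifying it with the coinvariant algebra in a canonical $H_x$-equivariant way requires a careful local computation that glues uniformly across the components of the $S_n$-orbit of $x'$. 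A secondary subtlety is the choice of grading orientation in the last step, so that the highest-weight component of $V_n(\co_{x'})$ indeed lands in the highest rather than the lowest degree of the filtration.
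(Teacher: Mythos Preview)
Your approach is essentially identical to the paper's: both identify $E_n(\co_x)$ with $\bigl(V_n(\co_{x'})\otimes R\bigr)^{H_x}$ where $R$ is the coinvariant algebra of the isotropy group, invoke Chevalley's theorem to recognize $R$ as the regular representation, and extract the filtration from the grading (equivalently, the $\mathfrak{m}$-adic filtration) on $R$. One small correction to your self-assessment: the ``main obstacle'' you flag is illusory, since $\pi\colon X^{(n)}\to\sym^n X$ is in fact flat (flatness of the finite map $X^n\to\sym^n X$ descends along the faithfully flat $S_n$-torsor $X^n\to X^{(n)}$), so $\pi^*\co_x$ is honestly underived---which is exactly what your own regular-sequence argument already established.
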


\begin{proof}
We have natural projections $\pi_1:X^{n}\times \bun_G^\xi\longrightarrow X^{(n)}\times \bun_G^\xi$ and $\pi_2: X^{(n)}\times \bun_G^\xi\longrightarrow \sym^nX\times \bun_G^\xi$. This theorem amounts to give a natural filtration on $\pi_{2*}(\pi_2^{\,\,*} (\co_{x})\otimes \V_n)$, which comes from the $\mathfrak{m}_{\pi_2^{-1}(x)}$-adic filtration on $\pi_2^{\,\,*} (\co_{x})$, and show $$\pi_{2*}(\pi_{1*}(\co_{x'})\otimes \V_n)\simeq \gra \pi_{2*}(\pi_2^{\,\,*} (\co_{x})\otimes \V_n)$$

In terms of representations of isotropy group $S_n$, $\pi_{1*}$ amounts to take coinduction, i.e. tensor with the regular representation, and $\pi_{2*}$ amounts to take invariant. Hence we only need to show that $\pi_2^{\,\,*} (\co_{x})$ is isomorphic to the regular representation of the isotropy group. This is a classical theorem of invariant theory \cite{chevalley1955invariants} (where $\pi_2^{\,\,*} (\co_{x})$ is called the coinvariant algebra).  The last statement follows from examination of the filtration structure.
\end{proof}

\begin{cor}
We have a natural exact sequence \begin{equation}
    0\longrightarrow E_n'(\co_x)\longrightarrow E_n(\co_x)\longrightarrow E_n''(\co_x)\longrightarrow 0
    \end{equation}where $E_n''(\co_x)$ is isomorphic to the highest weight irreducible component of $V_n(\co_x)$ and $E_n'(\co_x)$ has graded pieces isomorphic to lower weight components of $V_n(\co_x)$ and is generated by essential image of $V_m$ (hence $E_m$) for $m<n$.
\end{cor}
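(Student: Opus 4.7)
The plan is to unpack Theorem 2 using the Schur–Weyl decomposition of $V^{\otimes n}$, producing the short exact sequence by splitting the highest weight piece off the top of the filtration, and then identifying the remaining graded pieces as objects obtained from the lower-order functors $V_m$.

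By Schur–Weyl duality for $\sll$, we have an $\sll\times S_n$-equivariant decomposition
$$V^{\otimes n} \;\simeq\; \bigoplus_{k=0}^{\lfloor n/2 \rfloor}\sym^{n-2k}V \otimes S^{(n-k,k)},$$
with the $k=0$ summand being the highest weight component $\sym^n V$. Combined with Theorem 2, this refines the associated graded $V_n(\co_{x'})$ of $E_n(\co_x)$ into $\sll$-isotypic pieces, and the $k=0$ summand sits in the top filtration degree. First I would define $E_n''(\co_x)$ to be this top quotient and $E_n'(\co_x)$ its kernel; the desired short exact sequence follows immediately, and the induced filtration on $E_n'(\co_x)$ has graded pieces exactly the $k\geq 1$ summands, i.e.\ the lower weight components of $V_n(\co_{x'})$.

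Next I would show that $E_n'(\co_x)$ is generated by the essential images of the $V_m$ for $m<n$; by the filtration this reduces to showing each lower weight summand is. The key observation is that $\sym^{n-2k}V$ occurs as the highest weight summand of $V^{\otimes(n-2k)}$, so the twisted bundle $\sym^{n-2k}V(\mathcal{U})$ evaluated at $x$ is a direct summand of $V_{n-2k}(\co_{(x,\dots,x)})$ inside the Karoubian derived category $\D(\bun^\xi_{\sll})$, and hence lies in the triangulated subcategory generated by the essential image of $V_{n-2k}$. The Specht multiplicity $S^{(n-k,k)}$ simply contributes finitely many such copies. Assembling the graded pieces by successive extensions recovers $E_n'(\co_x)$ itself.

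The main technical obstacle is to verify that the highest weight isotypic component truly splits off as a quotient of $E_n(\co_x)$: one needs the top piece of the filtration in Theorem 2 to coincide exactly with the $\sym^n V$-summand of $V_n(\co_{x'})$, not to be mixed with lower weight pieces. This should follow from the explicit $\mathfrak{m}$-adic construction of the filtration together with the fact that inside the coinvariant algebra the top-degree piece is the sign/trivial line picking out the symmetric summand, but one must track the $S_n$-equivariance and the center-character twist by $\chi$ throughout so that the decomposition, the splitting, and the generation claim all descend correctly from $X^n$ to $\sym^n X$.
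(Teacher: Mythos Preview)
Your argument is correct and is precisely the unpacking the paper intends; the corollary is stated without a separate proof because it follows directly from Theorem~2 via the Schur--Weyl decomposition and the filtration splitting you outline, and your identification of each lower-weight piece $\sym^{n-2k}V$ as a summand of $V_{n-2k}(\co_{(p,\dots,p)})$ is exactly the intended generation argument. Regarding the obstacle you flag: the top \emph{quotient} of the increasing filtration corresponds to the degree-$0$ part of the coinvariant algebra, which carries the \emph{trivial} $S_n$-representation (the sign line sits at the opposite end, as a subobject), so taking $S_n$-invariants of $k_{\mathrm{triv}}\otimes V^{\otimes n}$ yields exactly $\sym^n V$ with no lower-weight mixing---this resolves the ambiguity in your ``sign/trivial'' hedge.
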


\begin{rmk}
We can really define $E_n'$ and $E_n''$ as Fourier-Mukai transform from certain strata of $X^{(n)}$ to $\bun_G^\xi$ so that $E_n'(\co_x)$ and $E_n''(\co_x)$ is what we define above. We will not use them hence will not spell out the details.
\end{rmk}

We will compute the homomorphisms by the Borel-Weil-Bott theorem for loop groups. To state this theorem (which holds for general semi-simple groups, though here we only use it for $G=\sll$), let us first set up our notations. Fix an integral level $c$, we call an irreducible representation $V$ of $G$ either regular or singular according to the location of the weight $(\lambda+\rho,\h+c)$ where $\lambda$ is the highest weight of $V$, $\rho$ is half-sum of the positive roots, $\h$ is the dual Coxeter number of $G$ (which is $2$ for $\sll$) and $c$ is the level: we say it's regular if it lies in the interior of some Weyl alcove and define $\leng(V)$ to be the smallest length of affine Weyl transform relating this Weyl alcove with the positive one and denote $V^{sm}$ to be the irreducible representation whose highest weight is the image of $\lambda$ in the positive Weyl alcove; otherwise when $(\lambda+\rho,\h+c)$ lies on the boundary of a affine Weyl alcove we say this representation is singular.

On $\bun^\xi_G$ there are a number of natural vector bundles to construct: given $N$ representations $V_i$ of $G$, we can pull it back along the natural evaluation map $X^{N}\times \bun^\xi_G\longrightarrow \B (G^{ad})^N$ and get a natural twisted sheaf on $X^{N}\times \bun^\xi_G$, which is only twisted in $X^N$ direction. Fix a point $(x_i)\in X^N$ we may consider its fiber which gives a vector bundle $\V$ on $\bun^\xi_G$. There is also a basic line bundle $\cdl$ which generates its Picard group.

Following is one form of the Borel-Weil-Bott theorem for loop groups proved\footnote{He worked in the untwisted case but parallel proof works with twisting, as we will sketch in the appendix; also he proved over complex numbers but this theory can be extended to algebraic setting by Lefschetz principle.} by C. Teleman in \cite{teleman1998borel}, which allows us to compute the cohomology of various tautological bundles on $\bun^\xi_G$:
\begin{thm}
$H^*(\bun^\xi_G,\cdl^{\otimes c}\otimes \V)$ vanishes if some $V_i$ is singular and is concentrated in degree $\sum\ell(V_i)$ if all of them are regular, in this case it's isomorphic to the global section $\Gamma(\bun^\xi_G,\cdl^{\otimes c}\otimes \V^{sm})$ where $\V^{sm}$ is the vector bundle obtained from representations $V_i^{sm}$.
\end{thm}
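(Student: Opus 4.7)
The plan is to adapt C.\ Teleman's proof of the Borel--Weil--Bott theorem for loop groups \cite{teleman1998borel} to the $\xi$-twisted setting, reducing the cohomology computation to a representation-theoretic statement about the affine Lie algebra via a Weil-uniformization presentation of $\bun^\xi_G$, and then invoking the Kumar--Mathieu BWB for Kac--Moody groups together with Teleman's positivity/vanishing arguments.

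First I would set up a twisted analogue of Weil uniformization. Fixing a point $x_0\in X$ disjoint from the insertion points $x_i$, the stack $\bun^\xi_G$ admits a presentation as a double coset involving the $\ga$-loop group at $x_0$, with the gerbe $\xi$ entering only through a modification of the rational (global) data, while the formal-disc geometry near $x_0$ is unaffected. Under this presentation the basic line bundle $\cdl$ corresponds to the central extension of the loop group at level $1$, so $\cdl^{\otimes c}\otimes\V$ is determined on the loop-group side by the level-$c$ central extension together with the representations $V_i$ inserted at the points $x_i$.

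Next I would identify $\Gamma(\bun^\xi_G,\cdl^{\otimes c}\otimes \V)$ with the space of twisted conformal blocks, and compute higher cohomology by a \v Cech/Cousin-type resolution whose local terms are induced modules for the affine Lie algebra. Teleman's key technical input -- the vanishing of higher (loop-algebra) cohomology of the relevant induced modules at positive level -- is a local Lie-algebraic statement that is insensitive to $\xi$ and therefore transfers directly. The degree shift by $\sum\ell(V_i)$ and the vanishing at singular weights are then extracted from the affine BWB theorem on the flag variety of the Kac--Moody group, applied factorwise in the insertion points; the identification of the top cohomology with $\Gamma(\bun^\xi_G,\cdl^{\otimes c}\otimes \V^{sm})$ follows because $V_i^{sm}$ is by construction the positive-alcove reflection of the highest weight of $V_i$ under the affine Weyl element realising $\ell(V_i)$.

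The main obstacle will be calibrating the level and the central character in the presence of $\xi$. Because $\xi$ shifts the identification of $\mathrm{Pic}(\bun^\xi_G)$ with characters of the central extension of the loop group, one must verify that the integer $c$ appearing in the statement matches the central charge coming from the gerbe, and in particular that the regularity condition on $(\lambda_i+\rho,\h+c)$ is compatible with the twist. One also needs to check that the evaluation-pullback of $V_i$ along $X\times \bun^\xi_G\longrightarrow \B\ga$ matches the twisted bundle of the conformal-block construction, so that ``factorwise regularity'' has the same meaning on both sides. Once this bookkeeping is done, the structure of Teleman's argument applies verbatim.
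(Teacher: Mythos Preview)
Your overall strategy --- Weil uniformization, then Kumar--Mathieu BWB on the affine Grassmannian, then Teleman's Lie-algebra cohomology computation for $G(\co_{X-x})$ --- is exactly the route the paper takes (sketched in its appendix). Two points of divergence are worth noting.

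First, you place the effect of $\xi$ on the global/rational side of the double quotient, while the paper places it on the formal-disc side: $\xi\in H^2(X,Z)\simeq Z$ is interpreted as an outer automorphism of the loop group $G(\K_x)$, and the twisted uniformization is $\bun^\xi_G\simeq G(\co_{X-x})\backslash G(\K_x)/G(\co_x)^\xi$, where $G(\co_x)^\xi$ is the image of $G(\co_x)$ under that automorphism. So the affine Grassmannian is the usual one with a $\xi$-twisted loop-group action, and the only modification to Teleman's computation is that the loop-group representation produced by the first step is twisted by $\xi$ before one takes $G(\co_{X-x})$-cohomology.

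Second --- and this is the main simplification you are missing --- your worries about $\xi$ shifting the level or the identification of $\mathrm{Pic}$ are unnecessary for the application. The paper observes that for the only levels actually used ($-2h^\vee<c\le 0$) the loop-group representation output by the first step is either zero (for $-2h^\vee<c<0$) or trivial (for $c=0$), so twisting by $\xi$ has no effect whatsoever and the untwisted statement carries over verbatim. Your ``calibration'' step is therefore vacuous in the range of interest; the bookkeeping you anticipate does not arise.
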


\begin{rmk}
Along the same lines we can actually show that for $-2\h<c<0$ all cohomology groups vanish. This will also be used for our semiorthogonal decomposition.
\end{rmk}

Now let's prove our key results. First we will prove the semiorthogonality.

\begin{lem}
For all $m>n$, let $A\in\D(\sym^m X)$, $B\in\D(\sym^n X)$, we have
$$\Hom(E_m(A),E_n(B))=0$$
\end{lem}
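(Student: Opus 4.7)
My plan is to reduce the vanishing statement to a cohomology computation on $\bun_G^\xi$ via the Fourier--Mukai formalism and then invoke Theorem 4. By standard manipulations (adjunction, base change, and projection formula), $\Hom(E_m(A), E_n(B))$ can be computed as $R\Gamma$ over $\sym^m X \times \sym^n X$ of $A^\vee \boxtimes B$ (with appropriate dualizing twists) against a sheaf $\mathcal{C}_{m,n}$ whose fiber at a pair of divisors $(x,y) \in \sym^m X \times \sym^n X$, with $x = \sum a_i x_i$ and $y = \sum b_j y_j$, is the cohomology
$$R\Gamma\bigl(\bun_G^\xi,\ \bigotimes_i \sym^{a_i}(V)_{x_i} \otimes \bigotimes_j \sym^{b_j}(V)_{y_j}\bigr).$$
Here the self-duality $V \cong V^\vee$ for $\sll$ is used to handle the $E_m^R$ side.

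By Theorem 4 applied at level $c = 0$, the irreducible $\sym^a(V) = V_a$ for $\sll$ is singular precisely when $a$ is odd, because $(\lambda+\rho, \h + c) = a + 1 \equiv 0 \pmod 2$ in that case. Hence the fiber of $\mathcal{C}_{m,n}$ vanishes unless all $a_i$ and $b_j$ are even; in particular at generic $(x,y)$ where all multiplicities are $1$, we obtain immediate vanishing. This handles the bulk of the locus.

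The main obstacle is the ``all-even multiplicities'' locus, where the fibers of $\mathcal{C}_{m,n}$ can be nonzero. To handle it, I would use the canonical filtration on $E_m(\co_x)$ from Theorem 2 and Corollary 1 together with an induction on $m$: for $m > n$, the highest-weight quotient $E_m''(\co_x)$ carries an irreducible $\sll$-component of weight exceeding any weight appearing in any graded piece of $E_n(\co_y)$ (whose weights are bounded by $n$), so after applying Theorem 4 the spectral sequence computing $\Hom(E_m(\co_x), E_n(\co_y))$ from these filtrations has no nonzero $E_1$-terms. The ``lower'' graded pieces $E_m'(\co_x)$ are, by Corollary 1, generated by the essential images of $V_{m'}$ with $m' < m$, so the induction hypothesis together with a devissage argument reducing general $A, B$ to skyscraper sheaves completes the proof.
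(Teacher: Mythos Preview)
Your reduction via the Fourier--Mukai formalism and the observation that at level $c=0$ the representations $\sym^a V$ are singular exactly when $a$ is odd are both correct, and this does kill the generic fiber of $\mathcal{C}_{m,n}$. The gap is in the handling of the even-multiplicity locus.

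The claim that ``the spectral sequence computing $\Hom(E_m(\co_x), E_n(\co_y))$ from these filtrations has no nonzero $E_1$-terms'' is false. Theorem~4 does not say that cohomology vanishes when the highest weight on one side exceeds that on the other; it says cohomology is \emph{concentrated in a specific degree and identified with sections of a reflected bundle}. Take $m=2$, $n=0$, $x = 2p$. Then $E_2''(\co_x)$ is the bundle associated to $\sym^2 V$ at $p$ (weight $2$), and $E_0(\co_\emptyset)=\co_{\bun}$. By Theorem~4, $\sym^2 V$ is regular with $\ell=1$ and $V^{sm}$ trivial, so $\Hom^1(E_2''(\co_x),\co_{\bun})\cong\Gamma(\bun,\co)=k\neq 0$. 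Likewise $E_2'(\co_x)\cong\co_{\bun}$, so $\Hom^0(E_2'(\co_x),\co_{\bun})=k$. Neither $E_1$-term vanishes; the lemma's conclusion $\Hom^*(E_2(\co_x),\co_\bun)=0$ comes from the fact that the spectral sequence differential between them is an isomorphism, and that is precisely what needs proof. Relatedly, your induction on $m$ does not close: $E_m'(\co_x)$ is generated by images of $E_{m'}$ for $m'<m$, but some of these have $m'\le n$, where the induction hypothesis says nothing.

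The paper supplies the missing ingredients in two steps. First, it proves $\Hom^0=0$ by translating (via Borel--Weil--Bott) into the category of $(\g[t],G)$-modules: one shows that the module corresponding to $E_m(\co_x)$ is \emph{generated} as a $\g[t]$-module by its top piece $\sym^m V$, so any map out of it is determined by its restriction to $\sym^m V$, which has no $\sll$-equivariant map to anything of weight $\le n$. Second, for higher degrees the paper uses a support/codimension argument: by induction on $m+n$ one may assume the Hom-sheaf is supported on the small diagonal in $\sym^m X\times\sym^n X$ (codimension $m+n-1$), while Borel--Weil--Bott bounds the cohomological amplitude of its stalks by $\tfrac{m+n}{2}$; once $\Hom^0=0$ this amplitude drops below the codimension and forces the sheaf to vanish. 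Your outline contains neither of these mechanisms.
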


\begin{proof}
It suffices to check this on skyscraper sheaves $A=\co_{x},B=\co_y$ (as they are generators of derived category.) By Borel-Weil-Bott theoem we only need to consider the case when $y\equiv x \mod\, 2$ as divisors. Let us first prove that $\Hom(E_m(\co_x),V_n(\co_y))=0$ for all such $y< x$. Again by Borel-Weil-Bott theory, this is reduced to computing the homomorphism (of corresponding representations) in the category of $(\g[t],G)$-modules (here $G=\sll$), and we may also assume $x$ and $y$ are both supported at one point $0\in \mathbb{A}^1$. In this case $E_m$ corresponds to the representation given by the construction
$R_m=\sym^m (V[t])/(\sigma_1,\sigma_2,\cdots,\sigma_m)\sym^m V[t]$ where $\sigma_i$ are fundamental symmetric polynomials of $t_i$, and $V_n$ corresponds to the representation given by the construction $S_n=\sym^n (V[t])/(t_1,t_2,\cdots,t_n)\sym^n V[t]$. By same arguments with Theorem 2, $R_m$ is identified with $V^{\otimes m}$ as a representation of $\sll$. Moreover we can directly see that the highest weight component $\sym^{m}V$ generates the whole space as a $\g[t]$-module by choosing a nice basis. As $S_n$ receives no nonzero $\sll$-equivariant map from $\sym^{m}V$, it receives no nonzero $\g[t]$-equivariant map from $R_m$. To prove higher degree homomorphisms also vanish, we use induction on $m+n$, and may assume $\Hom^*(E_m(A),E_n(B))$ is supported on the diagonal of $\sym^m X\times \sym^n X$. Moreover, Borel-Weil-Bott theory shows that the cohomological amplitudes of its stalks concentrate in $[0,\frac{m+n}{2}]$. By the previous arguments, there are no homomorphisms of degree $0$, hence the cohomological amplitudes have length at most $\frac{m+n}{2}-1$, which is smaller than the codimension $m+n-1$ of the diagonal, hence $\Hom^*(E_m(A),E_n(B))=0$.

% The claim follows from the fact that if we consider the decomposition of $V_n$ (here we identify the Fourier-Mukai transformation with its kernel) under the semiorthogonal decomposition of $\D(X^{(n)}\times \bun^\xi,\chi\circ\xi)\simeq \D(X^{(n)},\chi\circ\xi)\otimes\D( \bun^\xi)$ induced from Theorem 2, the rightmost piece being $E_n$, and all other graded pieces are either $0$ or $E_m(\co_y)$ for $y\equiv x \mod 2$ and $y<x$. (And all such $y$ appear.) We may compute that $\Hom(E_n(\co_x),E_m(\co_y))=\Hom(i_x^*(\mathcal{E}_n),i_y^*(\mathcal{E}_m))=\Hom_{\D(X^{(n)}\times \bun^\xi,\chi\circ\xi)}(\mathcal{E}_n,i_{x*}(i_y^*(\mathcal{E}_n)\otimes \mathrm{Alt}))=0$ because $\mathcal{E}_n$ lies in the rightmost piece while $i_{x*}(i_y^*(\mathcal{E}_m)\otimes \mathrm{Alt})$ lies in another component (to the left).

Now it suffices to show that we can always factor a map $E_m(\co_x)\longrightarrow E_n(\co_y)$ through the subcategory generated by $V_l(\co_z)$ (or equivalently $E_l(\co_z)$) where $z<x$ as divisors. Again by Theorem 2 and the following lemma we may replace $E_m,E_n$ by $V_m,V_n$ and this follow from the Borel-Weil-Bott theorem.   
\end{proof}

\begin{lem}
For all triangulated category $\mathcal{C}$ and objects $E,F$ with finite filtration, if all homomorphisms from $\gra\, E$ to $\gra\, F$ factor through a triangulated subcategory $\mathcal{D}$, then all homomorphisms from $E$ to $ F$ also factor through $\mathcal{D}$.
\end{lem}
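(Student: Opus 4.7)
The plan is to reformulate the problem inside the Verdier quotient $\overline{\mathcal{C}} := \mathcal{C}/\mathcal{D}$ (after replacing $\mathcal{D}$ by its thick closure if necessary, which is harmless because factoring through a direct summand of an object of $\mathcal{D}$ is the same as factoring through the object itself). By the standard characterization of zero morphisms in a Verdier quotient, a morphism $f : X \to Y$ in $\mathcal{C}$ factors through some object of $\mathcal{D}$ if and only if its image in $\overline{\mathcal{C}}$ vanishes. Under this dictionary, the hypothesis becomes $\Hom_{\overline{\mathcal{C}}}(\gra_i E, \gra_j F) = 0$ for all $i,j$, and the goal becomes $\Hom_{\overline{\mathcal{C}}}(E, F) = 0$.

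The key observation is that the localization $\mathcal{C} \to \overline{\mathcal{C}}$ is a triangulated functor, so the filtration triangles $E_{i-1} \to E_i \to \gra_i E$ and $F_{j-1} \to F_j \to \gra_j F$ remain distinguished in $\overline{\mathcal{C}}$. I then carry out a two-step devissage. First, for each fixed $i$, I induct on $j$ to establish $\Hom_{\overline{\mathcal{C}}}(\gra_i E, F_j) = 0$: applying $\Hom_{\overline{\mathcal{C}}}(\gra_i E, -)$ to the triangle $F_{j-1} \to F_j \to \gra_j F$ gives the three-term exact sequence
\[ \Hom_{\overline{\mathcal{C}}}(\gra_i E, F_{j-1}) \longrightarrow \Hom_{\overline{\mathcal{C}}}(\gra_i E, F_j) \longrightarrow \Hom_{\overline{\mathcal{C}}}(\gra_i E, \gra_j F), \]
whose outer terms vanish by the inductive hypothesis and by the hypothesis of the lemma, forcing the middle term to be zero. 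Then I induct on $i$ using the three-term exact sequence
\[ \Hom_{\overline{\mathcal{C}}}(\gra_i E, F) \longrightarrow \Hom_{\overline{\mathcal{C}}}(E_i, F) \longrightarrow \Hom_{\overline{\mathcal{C}}}(E_{i-1}, F) \]
coming from $E_{i-1} \to E_i \to \gra_i E$; the flanking terms are zero by the first step and the inductive hypothesis, so the middle vanishes. Setting $i=m$ and $j=n$ gives $\Hom_{\overline{\mathcal{C}}}(E,F) = 0$, which back in $\mathcal{C}$ means exactly that every morphism $E \to F$ factors through $\mathcal{D}$.

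The main conceptual step is the passage to the Verdier quotient, which converts the slightly awkward factoring condition into a clean Hom-vanishing statement and makes the devissage transparent. After that, the argument is purely formal, and crucially, three-term exactness of the long exact sequences is already enough: no control on shifts or higher Ext groups is needed, since the two flanking Hom-groups vanish at each inductive step and squeeze the middle to zero.
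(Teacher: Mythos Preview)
Your proof is essentially the paper's: pass to the Verdier quotient $\mathcal{C}/\mathcal{D}$ and then run a d\'evissage along the two filtrations. The paper phrases the second step as ``a spectral sequence argument'' in the case $\mathcal{D}=\{0\}$, but your explicit double induction is the same mechanism unwound, and in fact slightly sharper since you only invoke three-term exactness in degree zero rather than a full spectral sequence. As a comparison, you are on the same track and more detailed than the paper's two-line sketch.

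One point deserves a flag, though it is shared with the paper's argument: the hypothesis ``every morphism $\gra_i E \to \gra_j F$ factors through $\mathcal{D}$'' only says that the \emph{image} of $\Hom_{\mathcal{C}}(\gra_i E,\gra_j F)$ in $\Hom_{\overline{\mathcal{C}}}(\gra_i E,\gra_j F)$ vanishes; since localization can create new morphisms, this does not literally yield $\Hom_{\overline{\mathcal{C}}}(\gra_i E,\gra_j F)=0$ as you assert, and your d\'evissage uses the latter. The paper makes the same implicit leap when it writes ``just pass to the Verdier quotient $\mathcal{C}/\mathcal{D}$'', so this is a subtlety in the lemma as stated rather than a defect of your approach relative to the paper's.
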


\begin{proof}
When $\mathcal{D}=\{0\}$ this is follows from a spectral sequence argument, in general just pass to the Verdier quotient $\mathcal{C}/\mathcal{D}$.
\end{proof}

Then we proceed to fully-faithfulness: 

\begin{thm}
The functors $E_n$ are fully-faithful.
\end{thm}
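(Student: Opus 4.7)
The strategy is to apply the Bondal-Orlov criterion (Theorem 1). The required adjoints to $E_n$ and the Fourier-Mukai nature of $E_n^R \circ E_n$ follow from the construction of $E_n$ as the composition of the fully-faithful descent $\D(\sym^n X) \hookrightarrow \D(X^{(n)})$ with the Fourier-Mukai transform of kernel $\V_n$. So we must check, for closed points $x, y \in \sym^n X$, that $\Hom^*(E_n(\co_x), E_n(\co_y)) = 0$ when $x \ne y$, and that $\Hom^*(E_n(\co_x), E_n(\co_x))$ is concentrated in degrees $[0, n]$ with one-dimensional piece in degree zero.

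My plan is to follow the scheme of the semiorthogonality lemma (Lemma 1). Theorem 2 gives a filtration on $E_n(\co_x)$ whose associated graded is $V_n(\co_{x'})$ for any preimage $x' \in X^n$, and the filtration lemma lets us compute $\Hom^*(E_n(\co_x), E_n(\co_y)[m])$ from $\Hom^*(V_n(\co_{x'}), V_n(\co_{y'})[m])$, modulo maps factoring through $\langle E_k : k < n \rangle$ which are controlled by induction on $n$ and Lemma 1. Borel-Weil-Bott (Theorem 3) then expresses this graded Hom as $(\g[t], G)$-equivariant maps between local modules $R_n^{x'}, R_n^{y'}$ of the type appearing in the semiorthogonality proof: $R_n \simeq V^{\otimes n}$ as a $G$-module, generated as a $\g[t]$-module by its highest weight subspace $\sym^n V$. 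For $x \ne y$, an argument in the same spirit as Lemma 1 shows that no nonzero degree-zero map between $R_n^{x'}$ and $R_n^{y'}$ exists, since the image of the generator $\sym^n V$ must vanish at the points where the two divisors disagree; higher-degree vanishing then follows from the same dimension-versus-amplitude counting used in Lemma 1. For $x = y$, an endomorphism of $R_n^{x'}$ is determined by its restriction to $\sym^n V$, which must be a scalar by $G$-equivariance, and the degree range $[0, n]$ is pinned down by the weight decomposition of $V^{\otimes n}$ via Borel-Weil-Bott.

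The main obstacle is the self-endomorphism count in the $x = y$ case: in Lemma 1 the vanishing was immediate because $\sym^m V \not\subset V^{\otimes n}$ for $m > n$, but here $\sym^n V \subset V^{\otimes n}$ does appear, so we must explicitly rule out any extra degree-zero endomorphisms, either from the weight cross-terms in the filtration (to be absorbed into $\langle E_k : k < n \rangle$ using Lemma 1) or from subtler $\g[t]$-module identifications. The same toolkit as in Lemma 1 suffices, but it must now be deployed symmetrically on both sides of the Hom, with special care to verify that the identity endomorphism of $\sym^n V$ survives the filtration to give exactly the identity of $E_n(\co_x)$ without additional contributions.
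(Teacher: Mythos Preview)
Your overall plan---Bondal--Orlov criterion, induction on $n$, semiorthogonality from Lemma~1, and Borel--Weil--Bott for the actual computation---is exactly the paper's strategy. The difference is in how the inductive step is executed, and the paper's execution is what dissolves precisely the obstacle you flag in your last paragraph.

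You propose to filter \emph{both} $E_n(\co_x)$ and $E_n(\co_y)$ via Theorem~2 and then sort out the cross-terms between graded pieces, worrying (correctly) about extra degree-zero endomorphisms and about whether the identity survives. The paper instead uses the short exact sequence of Corollary~1,
\[
0\longrightarrow E_n'(\co_y)\longrightarrow E_n(\co_y)\longrightarrow E_n''(\co_y)\longrightarrow 0,
\]
only on the \emph{target}. Since $E_n'(\co_y)$ lies in the subcategory generated by $E_m$ for $m<n$, Lemma~1 gives $\Hom^*(E_n(\co_x),E_n'(\co_y))=0$ outright, so
\[
\Hom^*(E_n(\co_x),E_n(\co_y))\;\simeq\;\Hom^*(E_n(\co_x),E_n''(\co_y)).
\]
Now the target is a single irreducible piece, and Borel--Weil--Bott applies cleanly. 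Filtering the source once more (again via Corollary~1) produces the exact triangle
\[
\Hom^*(E_n''(\co_x),E_n''(\co_y))\to\Hom^*(E_n(\co_x),E_n''(\co_y))\to\Hom^*(E_n'(\co_x),E_n''(\co_y))\to\cdots
\]
from which the degree bounds $[0,n]$ and the one-dimensionality in degree~$0$ are read off directly.

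Your $(\g[t],G)$-module argument that $\End(R_n)=k$ (because $R_n$ is generated by the multiplicity-one piece $\sym^n V$) is correct and would eventually push through, but the asymmetric reduction above is what makes the ``special care'' you anticipate unnecessary: rather than tracking all graded cross-terms symmetrically, one side is collapsed to an irreducible before any spectral-sequence bookkeeping begins.
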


\begin{proof}

For $n=0$, we have $E_0(\co _\varnothing)\simeq \co_{\bun^\xi_G}$, thus fully-faithfulness amounts to computing some of the Hodge numbers of $\bun^\xi_G$, which are well-known: $h^{0,0}=1$ and $h^{i,0}=0$ for $i>0$.

 We now assume that we have proved the theorem for $m<n$. then we can see that in the long exact sequence (3) $E_n'$ lies in the full subcategory generated by $E_m$ for $|m|<|n|$ and hence $\Hom(E_n(\co_x),E_n(\co_x))=\Hom(E_n(\co_x),E_n''(\co_x))$ by the semiorthogonality. This allows us to apply Borel-Weil-Bott theorem because $E_n''(\co_x)$ comes from an irreducible representation. We have the following exact triangle 
 \begin{align*}
    & \Hom^*(E_n''(\co_x),E_n''(\co_x))\longrightarrow\Hom^*(E_n(\co_x),E_n''(\co_x))\\
     &\longrightarrow\Hom^*(E_n'(\co_x),E_n''(\co_x))\longrightarrow \Hom^*(E_n''(\co_x),E_n''(\co_x))[1]
 \end{align*}

By Borel-Weil-Bott theorem we can estimate their degree: $\Hom^*(E_n'(\co_x),E_n''(\co_x))$ lies in degree $[1,n]$, $\Hom^*(E_n''(\co_x),E_n''(\co_x))$ lies in degree $[0,n]$ and the cohomology is $k$ in degree $0$. Therefore $\Hom^*(E_n(\co_x),E_n''(\co_x))$ lies in degree $[0,n]$ and the coholomogy is $k$ on in degree $0$. By Orlov's criterion we see that $E_n$ is fully faithful.
\end{proof}

Hence we get the semiorthogonal decomposition of $\D(\bun_G^\xi)$:

\begin{thm}
For $G=\sll$ we have the following semiorthogonal decomposition
\begin{equation}
    \D(\bun_G^\xi)=\langle \mathcal{L} ^{\otimes k}\otimes \D(\sym^n X), \mathcal{A} \rangle
\end{equation}
where $0\leq k< 2h^\vee=4$, $n\in\N$ and $\mathcal{A}$ is the right orthogonal complement of previous blocks.
\end{thm}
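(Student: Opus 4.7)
The plan is to deduce the decomposition from the two results already established: full-faithfulness of each functor $E_n$ (Theorem 4) and same-level semi-orthogonality (Lemma 1). The only new input is a cross-level semi-orthogonality, which I would obtain by applying the Borel-Weil-Bott theorem for loop groups (Theorem 3) at negative level.

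I would order the blocks so that $\cdl^{\otimes k_1}\otimes\D(\sym^{n_1}X)$ precedes $\cdl^{\otimes k_2}\otimes\D(\sym^{n_2}X)$ when either $k_1<k_2$, or $k_1=k_2$ and $n_1<n_2$. Tensoring with $\cdl^{\otimes(-k_2)}$, the required vanishing
$$\Hom\bigl(\cdl^{\otimes k_2}\otimes E_{n_2}(A),\,\cdl^{\otimes k_1}\otimes E_{n_1}(B)\bigr)=0$$
becomes $\Hom(E_{n_2}(A),\cdl^{\otimes c}\otimes E_{n_1}(B))=0$ with $c:=k_1-k_2\in\{-3,-2,-1,0\}$. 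The subcase $c=0$ forces $k_1=k_2$ and $n_2>n_1$, which is exactly Lemma 1.

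For $c\in\{-3,-2,-1\}$, I would reduce to skyscrapers $A=\co_x$, $B=\co_y$ and, using the filtration of Theorem 2 together with a spectral-sequence argument, bound the Hom by Homs between associated graded pieces. Each such graded Hom is a cohomology group of the form $H^*(\bun_G^\xi,\cdl^{\otimes c}\otimes\V')$, where $\V'$ is the tautological bundle built from $n_1+n_2$ copies of the fundamental $V$ (using the self-duality $V\simeq V^*$ of $\sll$). At these levels the shifted fundamental alcove has length $\h+c\in\{-1,0,1\}$, so every dominant integer weight is swept onto a wall by the affine Weyl group; hence every insertion is singular, and Theorem 3 forces the vanishing whenever $n_1+n_2\geq 1$. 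The remaining case $n_1=n_2=0$ is covered by Remark 2, which gives $H^*(\bun_G^\xi,\cdl^{\otimes c})=0$ for $-2\h<c<0$. Each embedding $\cdl^{\otimes k}\otimes E_n$ is fully faithful with both adjoints (being a Fourier-Mukai transform from a smooth projective source), so admissible; setting $\mathcal{A}$ to be the right orthogonal complement of the subcategory generated by all blocks then yields the claimed decomposition.

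The main obstacle I anticipate is the filtered-spectral-sequence reduction in the cross-level case when the divisors underlying $x$ and $y$ share support: at a common point the tautological insertions combine into tensor powers $V^{\otimes r}$, and one must verify that every irreducible summand remains singular at the relevant level. Fortunately, at $c\in\{-3,-2,-1\}$ every dominant integer weight lies on an affine-Weyl wall, so singularity is uniform and no finer tensor-product analysis is required. In contrast with Lemma 1 at level $0$ — where the delicate $\g[t]$-module argument had to handle the trivial summand $\mathbf{1}\subset V^{\otimes 2k}$, which remains regular there — at the negative levels the trivial representation is itself singular, and the cross-level semi-orthogonality follows directly from Teleman's vanishing.
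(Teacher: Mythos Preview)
Your proposal is correct and matches the paper's intended argument, which simply records the theorem as a consequence of Theorem~4, Lemma~1, and Remark~2 (the paper gives no separate proof beyond the word ``Hence''). Note that you can streamline the cross-level step: Remark~2 asserts vanishing of $H^*(\bun_G^\xi,\cdl^{\otimes c}\otimes\V)$ for \emph{all} such $\V$ whenever $-2\h<c<0$, so there is no need to split into the cases $n_1+n_2\geq 1$ and $n_1=n_2=0$, nor to invoke Theorem~3 at negative level where its alcove language is not directly stated.
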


Now we proceed to study the substack $\bun_G^{\xi,ss}$ of semistable bundles. By the machinery of $\Theta$-strafication developed by D. Halpern-Leistner (cf. \cite{halpern2014structure}\cite{2020arXiv201001127H}) the Harder-Narasimhan-Shatz stratification on $\bun_G^\xi$ gave rise to a natural baric structure on $\D(\bun_G^\xi)$. It's plausible to conjecture that our semiorthogonal decomposition can be related to this one by mutation. In particular, this would implies the following theorem that the beginning terms in our semi-orthogonal decomposition naturally embeds into $\D(\bun_G^{\xi,ss})$, which may be checked directly. (We only consider the most interesting case with nontrivial $\xi$ for simplicity, where $\bun_G^{\xi,ss}\simeq \bun_G^{\xi,s}$)

\begin{thm}
For $G=\sll$ and nontrivial $\xi$ we have the following semiorthogonal decomposition:
\begin{equation}
    \D(\bun_G^{\xi,ss})=\langle \mathcal{L} ^{\otimes k}\otimes \D(\sym^n X), \mathcal{A}' \rangle
\end{equation}
where $0\leq k< 2h^\vee=4$, $n<g-[\frac{k}{2}]$ and $\mathcal{A}'$ is the right orthogonal complement of previous blocks.
\end{thm}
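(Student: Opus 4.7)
The plan is to derive this decomposition from the full decomposition on $\D(\bun_G^\xi)$ established in Theorem 7 via the magic window formalism of Halpern-Leistner \cite{halpern2014structure}\cite{2020arXiv201001127H}, applied to the Harder-Narasimhan stratification of $\bun_G^\xi$. For $G=\sll$ and nontrivial $\xi$ (so that $d := \deg L$ is odd and stability coincides with semistability), the unstable strata are indexed by integers $j$ with $j \geq (d+1)/2$: the stratum $\mathfrak{S}_j$ parameterizes rank-two bundles whose maximal destabilizing line subbundle has degree $j$, and retracts onto a center $Z_j$ parameterizing split bundles $L_1 \oplus L_2$ with $\deg L_i = j, d-j$, together with a destabilizing cocharacter $\lambda_j$ acting on $(L_1, L_2)$ with weights $(+1,-1)$.

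First I would invoke the magic window theorem to produce, for each admissible choice of weight intervals of length $\eta_j$ at each center (where $\eta_j$ is the $\lambda_j$-weight of the top exterior power of the conormal bundle of $Z_j$ in $\mathfrak{S}_j$, computable by Riemann-Roch), a full subcategory $\mathcal{W}_{\underline{w}} \subset \D(\bun_G^\xi)$ for which the restriction $\mathcal{W}_{\underline{w}} \to \D(\bun_G^{\xi,ss})$ is an equivalence. It then suffices to verify that every block $\mathcal{L}^{\otimes k} \otimes E_n(\D(\sym^n X))$ appearing in the claimed list lies in a single such window; once this is done, the semiorthogonality of these blocks in $\D(\bun_G^{\xi,ss})$ descends automatically from Lemma 1, and the fully-faithfulness of each block from Theorem 6.

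The concrete weight computation is the heart of the argument. On the residual gerbe of a split bundle $L_1 \oplus L_2$ in $Z_j$, the fiber of the Fourier-Mukai kernel $\V_n$ at a point $x^{(n)} \in \sym^n X$ is $V^{\otimes n}$ as a $\lambda_j$-representation, hence has weights in the interval $[-n,n]$. The basic line bundle $\mathcal{L}$ carries a $\lambda_j$-weight on $Z_j$ prescribed by the Dynkin/level pairing, scaling linearly in the slope shift $2j - d$. Combining these, the $\lambda_j$-weights of $\mathcal{L}^{\otimes k} \otimes E_n(\co_{x^{(n)}})$ at $Z_j$ lie in an interval of width $2n$ whose center moves linearly in $j$ as a function of $k$. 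The window width $\eta_j$ itself grows linearly in $j$ with slope controlled by $g$. Matching extremal weights against window width simultaneously for every $j \geq (d+1)/2$ yields precisely the bound $n < g - [\tfrac{k}{2}]$, with the asymmetry in $k$ (the floor function) reflecting the parity of the $\lambda_j$-weight of $\mathcal{L}$.

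The main obstacle is precisely this uniform compatibility across all HN strata simultaneously: a block must fit within \emph{one} fixed window $\underline{w}$, not a different window for each stratum. This requires checking that the shift in weight of $\mathcal{L}^{\otimes k}$ as $j$ varies tracks coherently with how $\eta_j$ grows, which reduces the whole problem to the single numerical inequality $n + [\tfrac{k}{2}] < g$. The two inputs feeding into this inequality are the level-pairing formula for the $\lambda_j$-weight of $\mathcal{L}$ and the Riemann-Roch computation for the conormal bundle to $\mathfrak{S}_j$ (contributing Brill-Noether-type terms linear in $g$). Once these two linear functions of $j$ are pinned down with correct normalization, the bound drops out cleanly, and the semiorthogonal decomposition of $\D(\bun_G^{\xi,ss})$ follows from Theorem 7 via the magic window equivalence.
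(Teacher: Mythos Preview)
Your proposal is correct and follows essentially the same approach as the paper: both invoke Halpern-Leistner's $\Theta$-stratification machinery for the Harder-Narasimhan stratification, reduce to bounding the $\lambda_j$-weights of $\mathcal{L}^{\otimes k}\otimes E_n$ on the unstable centers against the window width $\eta_j$, and extract the numerical bound $n<g-[\tfrac{k}{2}]$ from the resulting linear inequalities (the paper's own proof is in fact sketchier than yours). One small slip: $\eta_j$ should be the $\lambda_j$-weight of the determinant of the conormal of the stratum $\mathfrak{S}_j$ in the ambient stack $\bun_G^\xi$ (as you later correctly use), not of $Z_j$ in $\mathfrak{S}_j$.
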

\begin{proof}
The powerful machinery of \cite{2020arXiv201001127H} reduces this statement to a routine computation, by Theorem 2.2.2 we only need to bound the weight of natural $\G_m$ action on $E_n$ restricted to unstable locus (or more precisely differents between weights) by the weight of normal bundle (which was denoted by $\eta$ in \textit{loc. cit.}). This can be done directly and we can see that $n<g-[\frac{k}{2}]$ gives a maximal collection of subcategories from the decomposition which can be embedded into the stable locus.
\end{proof}

We may rigidify the $\B Z$ action on $\bun_G^{\xi,ss}$ and get the following decomposition for the coarse moduli $Bun_G^{\xi,ss}$:

\begin{thm}
For $G=\sll$ and nontrivial $\xi$ we have the following semiorthogonal decomposition:
\begin{equation}
    \D(Bun_G^{\xi,ss})=\D(\bun_G^{\xi,ss})^{\B Z}=\langle \mathcal{L}^{\otimes 2l}\otimes \D(\sym^n X), \mathcal{A}'' \rangle
\end{equation}
where $0\leq l<2$, $n<g-l$ and $\mathcal{A}''$ is the right orthogonal complement of previous blocks.
\end{thm}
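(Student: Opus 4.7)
The plan is to deduce this decomposition from Theorem 7 restricted to the semistable locus by taking $\B Z$-invariants and selecting the blocks on which $Z$ acts trivially. Since $\xi$ is nontrivial and $G = \sll$, a slope computation gives $\bun_G^{\xi,ss} = \bun_G^{\xi,s}$, and this stack is a $\B Z$-gerbe (with $Z = \mu_2$) over the smooth projective Fano variety $Bun_G^{\xi,s}$; hence $\D(Bun_G^{\xi,ss}) = \D(\bun_G^{\xi,ss})^{\B Z}$ is the trivial-character summand of the decomposition of $\D(\bun_G^{\xi,ss})$ under $\widehat{Z}$.

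Next, I would show that the semiorthogonal decomposition of Theorem 7 is already $\widehat{Z}$-graded: $Z$ acts on $\mathcal{L}$ through a fixed character of $\mu_2$, and on the Fourier--Mukai kernel $\V_n$ through the character $n \bmod 2$, because $Z = \{\pm 1\}$ acts by $-1$ on the fundamental representation $V$ and hence by $(-1)^n$ on $V^{\boxtimes n}$. Therefore each block $\mathcal{L}^{\otimes k} \otimes E_n(\D(\sym^n X))$ lives in a single weight summand of $\D(\bun_G^{\xi,ss})$, namely the one of weight $k + n \bmod 2$. Since tensoring with a nontrivial $Z$-character is an autoequivalence of $\D(\bun_G^{\xi,ss})$, the semiorthogonality relations of Theorem 7 are automatically $Z$-equivariant and the trivial-character summand inherits a semiorthogonal decomposition from the blocks of weight zero.

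The trivial-weight blocks are exactly those with $k + n$ even. For each $n$ and each $l \in \{0, 1\}$, exactly one of $k = 2l$ (when $n$ is even) or $k = 2l + 1$ (when $n$ is odd) survives, and these two parity cases repackage into a single series indexed by $(l, n)$ via the non-canonical identification of twisted and untwisted derived categories on $\sym^n X$ (cf.\ the footnote in Section 2 regarding the triviality of the Brauer group of a curve). The bounds on $n$ are inherited from Theorem 7: for $l = 0$, both the $k = 0$ and $k = 1$ cases impose $n < g$, while for $l = 1$, both the $k = 2$ and $k = 3$ cases impose $n < g - 1$. On the coarse moduli, $\mathcal{L}^{\otimes 2l}$ descends to $\theta^{\otimes l}$ where $\theta$ generates $\mathrm{Pic}(Bun_G^{\xi,ss})$, matching the final statement.

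The main obstacle will be the careful bookkeeping of center characters together with the twisted-sheaf conventions for $\D(\sym^n X)$: one must verify that the reorganization of the odd-$k$ and even-$k$ blocks into a single series indexed by $n \in \N$ (without explicit parity constraints) is genuinely the right way to phrase the decomposition, and that the descent of the various Fourier--Mukai kernels to the coarse moduli is canonical modulo the conventions fixed in Section 2. The stability of the semiorthogonal structure under the $\B Z$-invariants operation is then formal.
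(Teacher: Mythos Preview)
Your approach is correct and coincides with the paper's: both deduce the result from Theorem~7 by passing to $\B Z$-invariants via the gerbe identification $\D(Bun_G^{\xi,ss})\simeq\D(\bun_G^{\xi,ss})^{\B Z}$. The paper's proof is a single sentence invoking this equivalence and leaving the character bookkeeping implicit; you have supplied precisely that bookkeeping (the $Z$-weight of $\mathcal{L}^{\otimes k}\otimes E_n$ is $k+n\bmod 2$, and the surviving blocks repackage into the $(l,n)$ indexing with $k=2l$ or $2l+1$ according to the parity of $n$), together with the correct verification that the bounds $n<g-[\tfrac{k}{2}]$ collapse to $n<g-l$. Your closing caveat about the relabeling of odd-$k$ blocks under the twisted/untwisted identification on $\sym^n X$ is exactly the point the paper glosses over, and is the only place where anything nontrivial is hidden.
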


\begin{proof}
This follows from the general fact that $\D(X/\mathcal{G})\simeq\D(X)^\mathcal{G}$, i.e. the derived category of a quotient (with respect to the abelian groupstack $\B Z$ here) is equivalent to the invariant of the original derived category, and being invariant under $\B Z$ is equivalent to be fixed by $Z$ (which is an automorphism for all points in $\bun_G^{\xi,s}$)
\end{proof}

This theorem partially proves the BGMN (Belmans-Galkin-Mukhopadhyay and Narasimhan) conjecture.

\section{Generalizations to simple groups}

Our construction in the previous section only works for $G=\sll$ because we used essentially the property that the irreducible representations are precisely symmetric powers of the fundamental representation. For more general simple and simply connected\footnote{this not really a restriction, twisted bundles for $G$ can be always be viewed as twisted bundles for its universal cover, stability properties remaining unchanged} $G$ the symmetric powers are no longer irreducible but the fundamental representations still gives rise to embeddings of $\D(X)$ into $\D(\bun^\xi_G)$. There are $2h^\vee\times r$ copies of $\D(X)$ (where $r$ is the rank\footnote{note that the rank of vector bundle and cooresponding simple group differ by $1$, which may cause confusion.} of $G$ and $h^\vee$ is the dual Coxter number), by combining $r$ different fundamental representations with different powers of $\cdl$, and also $2h^\vee$ copies of $\D(pt)$ coming from different powers of $\cdl$. When the genus of $X$ is greater than $1$ and $\bun_G^{\xi,ss}\simeq \bun_G^{\xi,s}$ (for example, when $G$ is of type A and $\xi$ is a generator of $H^2(X,Z)$ \footnote{we don't know if this is still true for general $G$}), we can further obtain that all these blocks can be embedded into $\D(\bun_G^{\xi,s})$, among which (at least, exact for type A) two copies of $\D(pt)$ and $2r$ copies of $\D(X)$ are invariant under $\B Z$. Hence we have the following theorem (which generalizes the result in \cite{belmans2019admissible}):

\begin{thm} For simple group $G$ with rank $r$ we have a semiorthogonal decomposition:
\begin{equation}
    \D(\bun_G^\xi)=\langle \cdl ^{\otimes k}\otimes \D(pt), \cdl ^{\otimes k}\otimes\D(X)_i,\mathcal{A} \rangle
\end{equation}
where $0\leq k< 2h^\vee$, $i=1,\cdots,r$ label the fundamental representations of $G$ and $\mathcal{A}$ is the right orthogonal complement of previous blocks.

Furthermore, when the genus of $X$ is greater than $1$ and $G$ is of type A, for twisting parameter $\xi\in H^2(X,Z)$ generating $H^2(X,Z)$ (and thus $\bun_G^{\xi,s}$ is a smooth Fano variety), we have the following 
\begin{equation}
    \D(Bun_G^{\xi,s})=\D(\bun_G^{\xi,s})^{\B Z}=\langle \cdl^{\otimes h^\vee l}\otimes \D(pt), \cdl^{\otimes h^\vee l}\otimes\D(X)_i, \mathcal{A}''\rangle
\end{equation}
where $0\leq l<2$, $i=1,\cdots,r$ and $\mathcal{A}''$ is the right orthogonal complement of previous blocks. (Note here $\theta=\cdl^{\otimes h^\vee }$ is a generator of $\mathrm{Pic}(Bun_G^{\xi,s})$)
\end{thm}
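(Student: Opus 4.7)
The plan is to mirror the proof of Theorem 6, replacing the symmetric powers $\sym^n V$ of the fundamental representation of $\sll$ (and their Fourier--Mukai kernels on $X^{(n)} \times \bun_{\sll}^\xi$) by the individual fundamental representations $V_1, \ldots, V_r$ of $G$, each paired with a single point of the curve. For each $0 \leq k < 2\h$ and each fundamental $V_i$, define $F_{i,k} : \D(X) \to \D(\bun_G^\xi)$ as Fourier--Mukai transform against $\cdl^{\otimes k} \otimes \V_i$, where $\V_i$ is the pullback of $V_i$ along the evaluation map $X \times \bun_G^\xi \to \B\ga$; similarly define $F_k : \D(pt) \to \D(\bun_G^\xi)$ as tensoring with $\cdl^{\otimes k}$. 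The twisted-sheaf subtleties are handled exactly as in Section 2.

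All Hom computations between these functors on skyscraper sheaves reduce via adjunction to cohomology groups $H^*(\bun_G^\xi, \cdl^{\otimes c} \otimes \mathcal{W})$ with $c \in (-2\h, 2\h)$ and $\mathcal{W}$ a tautological bundle built from at most two fundamental representations evaluated at up to two points of $X$. Semiorthogonality between distinct levels (ordering the decomposition by increasing $k$ from left to right) requires $c < 0$ and follows from Remark 2. For fully faithfulness and for semiorthogonality within a single level (after a suitable internal ordering of the $\D(X)_i$), apply Theorem 4 at $c = 0$: since fundamental representations are themselves singular at level $0$, any tautological bundle evaluated at two distinct points of $X$ has vanishing cohomology, and at coincident points $x = y$ we decompose $V_i^* \otimes V_j$ into irreducibles. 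The trivial summand (appearing only when $i = j$) is the unique regular summand of length $0$ and contributes $k$ in degree $0$; the adjoint summand $\g$ in the $i = j$ case is regular of length $1$, reflects to the trivial representation, and contributes $k$ in degree $1$, matching $\Hom^1_X(\co_x, \co_x)$ on the diagonal; all remaining summands have highest weight on a wall of the fundamental alcove at level $\h$ and are singular. The Bondal--Orlov criterion (Theorem 1) is thus verified.

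For the second assertion in type A, apply the $\Theta$-stratification machinery of Halpern-Leistner (\cite{2020arXiv201001127H}, Theorem 2.2.2) exactly as in Theorem 6, bounding the weights of the induced $\G_m$-action on $\cdl^{\otimes k} \otimes \V_i$ along the Harder--Narasimhan strata (which admit explicit flag descriptions in type A) by the weights of the corresponding normal bundles; this yields the quoted range $0 \leq k < 2\h$, $i = 1, \ldots, r$. Rigidifying the residual $\B Z$-action via $\D(Bun_G^{\xi,s}) = \D(\bun_G^{\xi,s})^{\B Z}$ exactly as in Theorem 7 picks out the $\B Z$-invariant blocks (those with $k = \h l$ for $0 \leq l < 2$ in type A), giving the stated decomposition of the coarse moduli.

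The main obstacle is the uniform level-zero representation-theoretic input for general simple $G$: verifying that for fundamental $V_i, V_j$, every irreducible summand of $V_i^* \otimes V_j$ beyond the trivial and adjoint summands described above lies on a wall of the fundamental affine alcove at level $\h$, and fixing the correct within-level ordering of the $\D(X)_i$ blocks. For type A this follows from explicit Young-diagram combinatorics together with the natural ordering of fundamental weights, but for exceptional types both steps require case-by-case analysis, which is precisely the reason for the hedge \emph{at least, exact for type A} in the theorem statement.
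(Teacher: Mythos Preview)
Your approach mirrors the paper's (which gives no separate proof for this theorem, relying entirely on the Section 2 template). For type A and for the second assertion your argument is correct and matches the intended one.

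There is, however, a genuine gap in your treatment of the first assertion for general simple $G$. You assert that ``fundamental representations are themselves singular at level $0$'', which you use to kill $\Hom^*(F_{i,k}(\co_x),F_{j,k}(\co_y))$ for $x\neq y$. This is false outside type A. A fundamental weight $\omega_i$ is singular at level $0$ (i.e.\ $\omega_i+\rho$ lies on a wall at shifted level $\h$) precisely when its comark $a_i^\vee$ equals $1$, since $\langle\omega_i+\rho,\theta^\vee\rangle=a_i^\vee+\h-1$. In type A all comarks are $1$, but in every other type some $a_i^\vee\geq 2$. Concretely, for $G=D_4$ the node $i=2$ has $a_2^\vee=2$ and $V_2$ is the adjoint representation; it is \emph{regular} of length $1$ at level $0$, reflecting to the trivial representation. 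Then Theorem 4 gives, for $x\neq y$,
\[
\Hom^2\bigl(F_{2,k}(\co_x),F_{2,k}(\co_y)\bigr)\simeq H^2\bigl(\bun_G^\xi,\,V_2^{\,*}|_x\otimes V_2|_y\bigr)\simeq\Gamma(\bun_G^\xi,\co)\simeq k,
\]
so the Bondal--Orlov criterion fails for this block. Your ``main obstacle'' paragraph focuses on the $x=y$ decomposition of $V_i^*\otimes V_j$, but the more basic $x\neq y$ vanishing already breaks down whenever some comark exceeds $1$. Thus your argument as written establishes the first decomposition only for type A (where it is complete and correct); for other simple types the representation-theoretic input you invoke does not hold, and the statement as written cannot be obtained by this route without further modification.
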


Based on the motivic decomposition of $\bun_G$ \cite{behrend2007motivic}, previous results \cite{lee2018remarks} \cite{gomez2020motivic} and our main theorem for $G=\sll$, it's natural to make the following conjecture: 

\begin{conj}
For simple group $G$ with rank $r$ we have a semiorthogonal decomposition:
\begin{equation}
    \D(\bun_G^\xi)=\langle \cdl ^{\otimes k}\otimes \D(\sym^\mathbf{n} X)\rangle
\end{equation}
where $0\leq k< 2h^\vee$, $\mathbf{n}=(n_1,\cdots,n_r)\in \mathbb{N}^r$ and $\sym^\mathbf{n} X=\sym^{n_1}X\times\cdots\times\sym^{n_r}X$. Furthermore, for type A group with twisting parameter $\xi\in H^2(X,Z)$ if all semistable bundles are stable, we have a semiorthogonal decomposition:
\begin{equation}
    \D(Bun_G^{\xi,s})=\langle \cdl^{\otimes h^\vee l}\otimes \D(\sym^\mathbf{n} X)\rangle
\end{equation}
where $0\leq l<2$, $\mathbf{n}\in \mathbb{N}^r$ such that $l,\mathbf{n}$ and the genus $g$ satisfy a linear inequality (with constant term) depending on $G$. 
\end{conj}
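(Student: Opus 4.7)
The plan is to follow the blueprint of the $\sll$ argument, replacing the single fundamental representation by the collection $V_1,\dots,V_r$ of fundamental representations of $G$. For each multi-index $\mathbf{n}=(n_1,\dots,n_r)\in\N^r$, form the external tensor product $V_1^{\boxtimes n_1}\boxtimes\cdots\boxtimes V_r^{\boxtimes n_r}$ as a representation of $G^{|\mathbf{n}|}$ where $|\mathbf{n}|=n_1+\cdots+n_r$, and pull it back along the evaluation map $X^{n_1}\times\cdots\times X^{n_r}\times\bun_G^\xi\longrightarrow \B(\ga)^{|\mathbf{n}|}$. This twisted sheaf is equivariant under $S_{n_1}\times\cdots\times S_{n_r}$, hence descends to a twisted sheaf $\V_\mathbf{n}$ on $X^{(n_1)}\times\cdots\times X^{(n_r)}\times\bun_G^\xi$; composing the corresponding Fourier-Mukai functor with the embedding $\D(\sym^{\mathbf{n}}X)\longrightarrow \D(X^{(n_1)}\times\cdots\times X^{(n_r)})$ produces the candidate blocks $E_\mathbf{n}$.

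The skyscraper-sheaf analysis of Theorem 2 generalizes verbatim: $E_\mathbf{n}(\co_\mathbf{x})$ carries a filtration whose associated graded reproduces $V_\mathbf{n}(\co_{\mathbf{x}'})$ for a preimage $\mathbf{x}'$, since $\pi_2^{\,\,*}\co_\mathbf{x}$ is the regular representation of the product isotropy group $\prod_i S_{n_i}$ by Chevalley's theorem. Semiorthogonality between different $E_\mathbf{n}$ (and their twists by $\cdl^{\otimes k}$) should then reduce, via the Borel-Weil-Bott theorem for loop groups, to a statement about $(\g[t],G)$-module maps between tensor products of coinvariant algebras $\bigotimes_i \sym^{n_i}(V_i[t])/(\sigma_{i,1},\dots,\sigma_{i,n_i})$. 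The new representation-theoretic input required is the analogue of the $\sll$ fact that the highest weight irreducible component generates the entire coinvariant module over $\g[t]$; since for general $G$ the symmetric power $\sym^{n_i}V_i$ is no longer irreducible, one must identify the correct ``top'' Weyl-module component and prove a generation lemma, presumably via the character theory of affine Lie algebras at level $c$. Once semiorthogonality is established, fully-faithfulness follows from the Bondal-Orlov criterion exactly as in Theorem 4: semiorthogonality isolates the endomorphism Hom on the top Weyl-module component, whose cohomology is computed by Borel-Weil-Bott.

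The main obstacle will be the generation claim, i.e.\ that the blocks together exhaust $\D(\bun_G^\xi)$ rather than merely forming a semiorthogonal collection. Even in the $\sll$ case treated in our main theorem the vanishing of $\mathcal{A}$ was left conjectural (and settled independently by Tevelev and Torres via stable pairs), so for general simple $G$ one would likely combine a motivic count matching the Behrend-Dhillon decomposition of $\bun_G$ with a $\Theta$-stratification argument expressing every natural bundle on $\bun_G^\xi$ in terms of the constructed blocks. For the second part of the conjecture on $Bun_G^{\xi,s}$, the precise linear inequality constraining $l$, $\mathbf{n}$ and $g$ should fall out of $\G_m$-weight estimates on each Harder-Narasimhan stratum as in the proof of Theorem 6: one bounds the weights of the restriction of $\cdl^{\otimes h^\vee l}\otimes E_\mathbf{n}$ to the unstable locus against the weights of the normal bundle predicted by the Halpern-Leistner machinery \cite{2020arXiv201001127H}, with the sharp constant determined by the dual Coxeter number and the fundamental weights of $G$.
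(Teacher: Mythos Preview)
The statement you are attempting to prove is labeled a \emph{Conjecture} in the paper, and the paper does not supply a proof; in fact the paragraph immediately following it explains why the naive approach fails and leaves the question for future work. So there is no paper proof to compare against, and what you have written is a strategy sketch rather than a proof.

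That said, your sketch reproduces exactly the construction the paper warns against, and the gap is the one the paper names. Your kernels $\V_\mathbf{n}$ are built from $V_1^{\boxtimes n_1}\boxtimes\cdots\boxtimes V_r^{\boxtimes n_r}$, and you hope that after stripping off ``smaller'' pieces via Corollary~1 the top of the filtration is an irreducible highest-weight representation to which Borel--Weil--Bott applies and on which the Bondal--Orlov degree count works. For $\sll$ this holds because $\sym^m V$ is already irreducible; for general $G$ the product $\sym^{n_1}V_1\otimes\cdots\otimes\sym^{n_r}V_r$ is not irreducible, and there is no single ``top Weyl-module component'' that both (i) is the target of the semiorthogonal projection past the smaller $E_\mathbf{m}$ and (ii) is an irreducible $G$-module. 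Consequently the step ``fully-faithfulness follows from the Bondal--Orlov criterion exactly as in Theorem~4'' does not go through: the endomorphism computation no longer localizes on a single irreducible, and the cohomological-amplitude bound from Borel--Weil--Bott does not give the range $[0,|\mathbf{n}|]$ needed. The paper states this explicitly: the Fourier--Mukai transforms obtained from these naive kernels ``are not embeddings.''

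The paper's suggested repair is different from what you propose: rather than trying to prove a generation lemma for the naive $E_\mathbf{n}$, one should \emph{replace} $E_\mathbf{n}$ by its semiorthogonal projection onto the right orthogonal of the previously constructed blocks, inductively. This produces a complicated functor whose kernel is no longer the tautological bundle $\V_\mathbf{n}$, and the paper does not claim to know that it works. Your plan for the second half (weight estimates on Harder--Narasimhan strata to extract the inequality governing $\D(Bun_G^{\xi,s})$) is in line with the paper's expectations, but is of course contingent on first having the embeddings.
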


\begin{rmk}
The difference between the prefactors in the motivic decomposition and in the semiorthogonal decomposition is explained by the fact that $\bun_G^\xi$ is not a variety but a stack. The derived category is more closely related to the $K$ theory, which is isomorphic to the cohomology (when tensoring with $\mathbb{Q}$) for varieties but not for stacks. For stacks the decomposition of derived category should be related to the motivic decomposition of its inertia stack.
\end{rmk}

However, if we naively mimic the construction in the previous section by taking the product of symmetric powers of fundamental representations, the Fourier-Mukai transforms we get are not embeddings because products of symmetric powers of fundamental representations are not irreducible representation in general. We still need to take the semiorthogonal projection onto the complement of smaller blocks, by induction, which give rise to a complicated functor serving as a candidate for the correct embedding. We leave further discussion of this question for future works.

\appendix
\section{Borel-Weil-Bott theory for twisted bundles}

In this paper we used a variant of Borel-Weil-Bott theory developed by C. Teleman in \cite{teleman1995lie}\cite{teleman1996verlinde}\cite{teleman1998borel} for twisted bundles, which was not covered in the previous works but follows from parallel argument. In this appendix we will give a sketch how the proof goes, focusing on the modifications we make for the twisting.

The computation of coherent cohomology in \cite{teleman1998borel} on $\bun_G$ is based on the celebrated Weil uniformization $\bun_G\simeq G(\co_{X-x})\backslash G(\K_x)/G(\co_x)$ (we will only write the single punctured version for simplicity). For twisted bundles we have the following analogue: by standard theory of loop groups (cf. \cite{pressley1988loop}) $\xi\in H^2(X,Z)\simeq Z$ determines an outer automorphism of the loop group $G(\K_x)$ and let $G(\co_x)^{\xi}$ be the image of $G(\co_x)$ under this outer-automorphism (which is well defined up to conjugation), then we may write the moduli of twisted bundles as a double quotient $\bun^\xi_G\simeq G(\co_{X-x})\backslash G(\K_x)/G(\co_x)^{\xi}$. Here $G(\K_x)/G(\co_x)^{\xi}$ as a homogenous space of loop group may be viewed as the usual affine Grassmannian $G(\K_x)/G(\co_x)$ with a twisted action by the outer automorphism $\xi$. 

Let us briefly recall the calculation in \cite{teleman1998borel}: to compute the cohomology of vector bundles on $\bun_G\simeq G(\co_{X-x})\backslash G(\K_x)/G(\co_x)$ we may first compute the cohomology of homogenous vector bundles on the affine Grassmannian $G(\K_x)/G(\co_x)$, which has nothing to do with the curve $X$ and is well known to representation theorists (cf. \cite{kumar1987demazure}), this gives us a loop group representation and then we compute the Lie group cohomology with respect to $G(\co_{X-x})\subset G(\K_x)$, which was done in \cite{teleman1998borel}.

In our twisted setting, the only difference is that we need to twist the representation by $\xi$ before taking cohomology with respect $G(\co_{X-x})$, which is easily described by loop group representation theory. For our purpose this is even simpler: in this paper we only need to consider the case when $-2h^\vee<c\leq 0$, when $-2h^\vee<c<0$ the representation we get is always $0$, and when $c=0$ the only irreducible representation is the trivial one. Hence the twisting does not make any difference, which explains why our main theorem does not depend on the twisting $\xi$ we use. (But different $\xi$ gives rise to drastically different $\Theta$-stratification, hence the derived category of stable bundles depends essentially on  the twisting $\xi$).

\bibliographystyle{plain}
\bibliography{ref}
\end{document}